\theoremstyle{plain}
\newtheorem{teo}{Theorem}[section]
\newtheorem{lem}[teo]{Lemma}
\theoremstyle{definition}
\newtheorem{defn}[teo]{Definition}
\numberwithin{equation}{section}
\def\bb1{{\mathbbm{1}}}
\begin{document}
	\baselineskip=15pt
	\title[The coverage ratio of the frog model]{The coverage ratio of the frog model on complete graphs}
	\author{Gustavo~O.~de Carvalho}
	\author{F\'abio~P.~Machado}
	\address
	{Institute of Mathematics and Statistics
		\\ University of S\~ao Paulo \\ Rua do Mat\~ao 1010, CEP
		05508-090, S\~ao Paulo, SP, Brazil - gustavoodc@ime.usp.br and fmachado@ime.usp.br}
	\noindent
	\thanks{Research supported by Capes (88887.676435/2022-00), CNPq (303699/2018-3 and 132598/2020-5), FAPESP (17/10555-0)}
	\keywords{complete graph, coverage ratio, frog model, random walks system.}
	\subjclass[2010]{60K35, 05C81}
	
	\date{\today}

%
%
%
%
%

\begin{abstract}
The frog model is a system of interacting random walks. Initially, there is one particle at each vertex of a connected graph $\mathcal{G}$. All particles are inactive at time zero, except for the one which is placed at the root of $\mathcal{G}$, which is active. At each instant of time, each active particle may
die with probability $1-p$. Once an active particle survives, it jumps on one of its nearest vertices, chosen with uniform probability, performing a discrete time simple symmetric random walk (SRW) on
$\mathcal{G}$. Up to the time it dies, it activates all inactive particles it hits along its way. From the moment they are activated on, every such particle starts to walk, performing exactly the same dynamics, independent of everything else.
In this paper, we take $\mathcal{G}$ as the $n-$complete
graph ($\mathcal{K}_n$, a finite graph with each pair of vertices
linked by an edge). We study the limit in $n$ of the coverage ratio, that is, the proportion of visited vertices by some active particle up to the end of the process, after all
active particles have died. 
\end{abstract}

\maketitle


\section{Introduction}
\label{cap:introducao}
The frog model is a system of interacting random walks on a given rooted connected graph, finite or infinite. Initially, the graph contains some configuration (maybe one per vertex) of inactive particles on its vertices. The particles at the root of the graph start out active and perform independent simple nearest-neighbor discrete time random walks. Whenever a vertex with inactive particles is first visited, all the particles at the vertex wake up and begin their own independent random walks, waking up inactive particles as they visit them. The lifetime of a particle, in case it is active, can be finite or infinite. This model is often associated with the dynamics of the spread of a rumor or the spread of a virus in a connected population.
A formal definition of the frog model can be found in \cite{phase_transition}.

Part of the literature on the frog model is focused on the case where active particles have infinite lifetime and $\mathcal{G}$ is an infinite connected graph. In this setup, a goal is to study recurrence/transience, that is, conditions for the root of the graph to be visited infinitely often, as in \cite{recorrencia2,recorrencia3,recorrencia6}. 
Another approach on infinite graphs is to study the limit shape of the set
of visited vertices, also known as \textit{shape theorem}, as found in \cite{shape_theorem}.
On finite graphs and particles with infinite lifetime, one can study the first time where each vertex of the graph is visited
at least once (the \textit{coverage time}) as in \cite{vida_fixa2,cover_time}. Finite lifetime on finite graphs are considered in
\cite{vida_fixa,vida_fixa2}.


In this paper, we are interested in studying the limit coverage ratio for the frog model on $\mathcal{K}_n$, the $n-$complete graph, when the lifetime of an active particle follows a geometric random variable with parameter $1-p$. That
is a consequence of the fact that at each instant of time, indenpendently of everything else, each active particle may
die with probability $1-p$ or survive with probability $p$. 
We know from \cite{grafo_completo1} that, starting from a one particle per vertex configuration on $\mathcal{K}_n$, there exists a phase transition property, which means the existence of a non-trivial critical parameter $p_c$ such that the coverage ratio converges to zero in distribution for $p<p_c$, while this convergence does not occur for $p>p_c$. See \cite{grafo_completo2} for some interesting result with this setup. It is also shown in \cite{grafo_completo1} that $p_c=1/2$. Our main theorem extend the previous result, giving more details about the number of visited vertices, and proving also, for $p>p_c=1/2$, the exact limit in $n$ of the probability that the coverage ratio is strictly larger than zero.


Let us start with a basic definition

\begin{defn} 
	Let  $V_\infty = V_\infty(\mathcal{K}_n)$ be the number of vertices of $\mathcal{K}_n$, visited at least once, up to the time there are no more active particles. 
\end{defn}

Our main result is the following.

\begin{teo} \label{teorema:principal} 
	
Consider the frog model on $\mathcal{K}_n$, starting from the configuration with one particle per vertex. Then

(i) For $p\leq 1/2$, for every function $f:\mathbb{N}\rightarrow \mathbb{R}$ such that $\lim_{n \to \infty}f(n)=+\infty$,
\[\lim_{n \to \infty}P(V_\infty \leq f(n))=1.
\]

(ii) For $p>1/2$, there are constants $c>0$ e $c'\in (0,1)$ such that
\[\lim_{n \to \infty}P(V_\infty \leq c \log n)=\frac{1-p}{p},\]
\[\lim_{n \to \infty}P(V_\infty \geq c' n)=\frac{2p-1}{p}.
\]
\end{teo}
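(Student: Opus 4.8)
The plan is to reduce the frog model to a one-particle-at-a-time exploration and then compare that exploration to a Galton--Watson process. First I would record the key structural fact that, because a particle activates the dormant particle sitting on every vertex it steps on, the set of vertices ever visited coincides with the set of vertices whose particle is ever activated; hence $V_\infty$ equals the total number of activated particles. Moreover, this set does not depend on the order in which we let the activated particles move, so I may explore it sequentially: maintain a set $R$ of already-activated vertices (initially the root) and a queue of activated-but-unprocessed particles, and to process a particle reveal its whole trajectory, namely $\xi\sim\mathrm{Geom}(1-p)$ with $P(\xi=k)=(1-p)p^k$ jumps to independent uniform vertices, enqueueing every freshly hit vertex. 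Writing $Y_t=|R_t|-t$ for the queue length after $t$ particles have been processed, we have $Y_0=1$, $Y_t=Y_{t-1}-1+\eta_t$ where $\eta_t$ is the number of new vertices produced at step $t$, and $V_\infty=\tau:=\inf\{t:Y_t=0\}$, with $|R_t|$ nondecreasing and $|R_\tau|=\tau$.

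The comparison object is the Galton--Watson process with offspring law $\mathrm{Geom}(1-p)$, whose mean is $\mu=p/(1-p)$ and whose extinction probability is the smallest root of $s=(1-p)/(1-ps)$, namely $q=(1-p)/p$ when $p>1/2$. Since a jump lands on a fresh vertex only if it avoids $R$ and the earlier jumps of the same particle, we always have $\eta_t\le\xi_t$, so coupling the two processes through a common sequence $(\xi_t)$ gives $Y_t\le\tilde Y_t$ for the exploration walk $\tilde Y_t=\tilde Y_{t-1}-1+\xi_t$, and hence $V_\infty\le T$ where $T=\inf\{t:\tilde Y_t=0\}$ is the total progeny. This domination already settles (i) and the die-out half of (ii): when $p\le1/2$ the process is (sub)critical, so $T<\infty$ a.s.\ and $P(V_\infty\le f(n))\ge P(T\le f(n))\to1$ for any $f\to\infty$; when $p>1/2$ we get $P(V_\infty\le c\log n)\ge P(T\le c\log n)\to P(T<\infty)=q$, since $c\log n\to\infty$.

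For the explosion half of (ii) I would argue in two stages. In the \emph{early stage}, while only $O(\log n)$ vertices have been touched, the chance that any jump collides with $R$ or with an earlier jump is $O((\log n)^2/n)=o(1)$, so with high probability $\eta_t=\xi_t$ exactly and $Y$ agrees with the positive-drift walk $\tilde Y$; because $\tilde Y$ reaches level $L:=\lceil C\log n\rceil$ with probability tending to its survival probability $1-q$, the exploration reaches $Y_t=L$ with $|R_t|=O(\log n)$ with probability $1-q-o(1)$. In the \emph{bulk stage}, whenever $|R_{t-1}|\le c'n$ with $c'<(2p-1)/p=1-1/\mu$ one has $E[\eta_t\mid\mathcal F_{t-1}]=\mu\bigl(1-|R_{t-1}|/n\bigr)+O(1/n)\ge 1+\delta$ for some $\delta>0$, so a stochastic lower bound on $\eta_t$ with mean exceeding one and exponential tails makes $e^{-\lambda Y_t}$ a supermartingale for small $\lambda>0$ until $|R_t|$ first reaches $c'n$. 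An optional-stopping (ruin) estimate then bounds the probability that $Y$ returns to $0$ before that time by $e^{-\lambda L}=n^{-\Theta(1)}=o(1)$; since $|R_t|$ is nondecreasing, surviving until it hits $c'n$ forces $V_\infty\ge c'n$, so $P(V_\infty\ge c'n)\ge 1-q-o(1)$. (The fluid heuristic $dr/dt=\mu(1-r/n)$ in fact suggests $V_\infty/n\to s^\ast$ with $1-e^{-\mu s^\ast}=s^\ast$, but the theorem only needs a positive $c'$, so the supermartingale route suffices.) Finally, $\{V_\infty\le c\log n\}$ and $\{V_\infty\ge c'n\}$ are disjoint for large $n$, so their probabilities sum to at most $1$; combined with $\liminf P(V_\infty\le c\log n)\ge q$ and $\liminf P(V_\infty\ge c'n)\ge 1-q$ and $q+(1-q)=1$, both inequalities become equalities, giving the two stated limits.

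I expect the genuine difficulty to lie entirely in the bulk stage: one must control the law of $\eta_t$ uniformly over all states with $|R_{t-1}|\le c'n$, including the within-particle self-collisions and the unbounded support of $\xi_t$, well enough to obtain both the uniform drift bound $E[\eta_t\mid\mathcal F_{t-1}]\ge1+\delta$ and a uniform exponential-moment bound for the supermartingale, and then to convert ``positive drift from a high starting level'' into a quantitative no-return estimate. The early-stage coupling and the domination argument are by comparison routine facts about Galton--Watson processes and random walks with positive drift.
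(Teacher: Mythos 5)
Your proposal is correct, and in the supercritical phase it takes a genuinely different route from the paper. Both arguments rest on a one-action-at-a-time exploration compared with branching processes, but the paper explores jump by jump: each round is a single step of a single particle, with offspring $X_k\in\{0,1,2\}$, so the comparison processes have offspring supported on $\{0,1,2\}$ with mean near $2p$, whereas you process one particle's entire geometric trajectory at a time, giving $\mathrm{Geom}(1-p)$ offspring with mean $p/(1-p)$; both are critical exactly at $p=1/2$ and share the extinction probability $(1-p)/p$, so either bookkeeping works, and your part (i) is the paper's argument up to the choice of dominating offspring law. The substantive differences are in the supercritical phase. The paper rules out the intermediate regime by a Chernoff bound plus a union bound over $k\in[k_-,k_+]$, showing the number of potentially active particles stays above a linear ramp $a_k$ on that whole window, and then computes $\lim_{n}P(R<k_-)=(1-p)/p$ exactly by sandwiching the exploration between two explicit branching processes $X^-\preceq X\preceq X^+$ and showing the upper one almost never dies after round $k_-$. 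You instead rule out intermediate extinction by an exponential supermartingale/ruin estimate started from level $C\log n$, and you avoid the lower sandwich entirely: you prove only the two one-sided bounds $\liminf P(V_\infty\le c\log n)\ge (1-p)/p$ and $\liminf P(V_\infty\ge c'n)\ge (2p-1)/p$ and close by disjointness of the two events, which is a clean way to upgrade one-sided estimates to the stated limits. The price of your route is that the bulk-stage drift and exponential-moment bounds must be verified uniformly over all states with $|R_{t-1}|\le c'n$, including within-particle self-collisions (this does work, since $0\le\eta_t\le\xi_t$ and $E[\xi_t^2]<\infty$ give uniform second moments and the drift correction is $O(E[\xi^2]/n)$), and the early stage needs the routine but unstated large-deviation fact that a positive-drift walk that avoids $0$ reaches level $C\log n$ within $O(\log n)$ steps with high probability, so that the collision-free coupling really persists until level $L$ is reached; with those details supplied, the argument is complete.
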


Observe that the function $f(n)$ in part \textit{(i)} of Theorem~\ref{teorema:principal} can be replaced by $c \log n$, for any $c>0$.
Still, part $(i)$ of Theorem~\ref{teorema:principal} is valid for any sequence of connected graphs. The proof we provide in this paper works also in this case. 

The ideas and techniques used in the proof of the Theorem~\ref{teorema:principal} are inspired by the study of the existence of a giant component on Erdös–Rényi graph (see \cite[sec. 11.2]{componente_gigante} for details). Some basic concepts of branching processes (see~\cite[sec. 5.4]{branching}) are also considered.

The intuition behind Theorem~\ref{teorema:principal} comes from the fact that if one considers a stage when there is a small number of visited vertices on a large graph, what one sees is that the next surviving particle will, most likely, visit an unvisited vertex, activating a new particle. Therefore, in the beginning of the process, the number of vertices visited is near to the number of individuals in a branching process that generates 2 offspring with probability $p$ and 0 offspring with probability $1-p$. In this setup, the extinction probability equals to $1$ for $p \leq 1/2$ and to $\frac{1-p}{p}$ for $p>1/2$. Besides that, for $p>1/2$, the branching process mentioned above grows infinitely with probability $\frac{2p-1}{p}$, an event that is in some sense analogous to $V_\infty$ being on the order of $n$.

%
%
%
%

The paper is organized as follows. 
In section~\ref{sec:proc_auxiliar} we describe an auxiliary process aiming to simplify the computations needed. In section~\ref{sec:prova_teorema} one finds the proofs of Theorem~$\ref{teorema:principal}$. The sub-critical phase ($p \leq 1/2$) is proved in subsection~\ref{sec:subcritica}, for any sequence of connected graphs, while the supercritical phase ($p > 1/2$) is proved in subsection~\ref{sec:supercritica}.

\section{The auxiliary process}
\label{sec:proc_auxiliar}
Let us define an auxiliary process starting from a small modification on the frog model. Let us start from the $n+1$-complete graph, $\mathcal{K}_{n+1}$, picking one of its vertices and defining it as its root. We denote the set of vertices of $\mathcal{K}_{n+1}$ by $\mathcal{V}(\mathcal{K}_{n+1})$.
At time zero, there is one active particle at the root and one inactive particle at each other vertices of $\mathcal{K}_{n+1}$. All particles present at time zero are declared \textit{original}. This is done because at some point, \textit{extra} particles are considered.

The auxiliary process has the following characteristics

\begin{itemize}
    \item It is taken in rounds, so that only one particle acts (dying or moving) in each round. The particle which is being considered each time, survives with probability $p$ and, if so, it jumps on one of its nearest vertices,
    chosen with uniform probability.
    \item Let $R$ be the time that the last original active particle dies. It is the time that the original frog model dies out. At this very time, every remaining inactive particles become \textit{extra} inactive particles. Besides that, a brand new \textit{extra} active particle is placed at the root of $\mathcal{K}_{n+1}$, so it acts in round $R+1$.
    \item From now on every time the last extra active particle dies, another extra active particle is placed at the root of $\mathcal{K}_{n+1}$.
\end{itemize}

Observe that there are only extra particles from the round $R+1$ on. Observe also that $R<\infty$ with probability 1, as the number of original particles activated up to a given time is at most $|\mathcal{V}(\mathcal{K}_{n+1})|=n+1<\infty$ and the lifetime of any active particle is given by a geometric random variable of parameter $1-p$. The lifetime is finite with probability 1, since we are restricted to $p < 1$.

Even though $R<\infty$, the auxiliary process as a whole is infinite, as it always renews itself when we consider an extra particle at the origin. This rule makes it possible to analyze the behavior of the particle (original or extra) that performs the $k$th round for any $k \in \mathbb{N}$, without the need to check whether the original process has finished or not.

Now, for $k\in \mathbb{N}$ we define the random variable $X_k$ which depends on what happens with the particle that acts in round $k$ during the auxiliary process:

\begin{itemize}
    \item $X_k=0$ if it dies.
    \item $X_k=1$ if it survives and jumps to a vertex which has been visited before round $k$.
    \item $X_k=2$ if it survives and jumps to a vertex which has never been visited before round $k$.
\end{itemize}

Therefore, $X_k$ can be interpreted as the number of descendants (in the branching process sense) of the particle that acts in the $k$th round. Observe that the sequence $X_1,X_2,...$ is neither independent nor identically distributed.

From the number of descendants in each round, we define the number of potentially active particles at the end of the $k$-th round as

\begin{equation} \label{def:pot_ativas}
A'_0:=1; \hspace{5mm} A'_k:=1+\sum_{j=1}^k (X_j-1), \hspace{1mm} k\geq 1.\end{equation}

The term potential comes from the fact that the previous expression also considers descendants of extra particles. We can disregard this by taking

\begin{equation}\label{def:r}
R=\inf\{k:A'_k=0\}
\end{equation}

\noindent
and setting the number of active particles at the end of the $k$th round as

\begin{equation} \label{def:ativas}
A_k:=A'_k\bb1_{(k<R)}, \hspace{2mm} k \in \mathbb{N}. 
\end{equation}

Similarly, we define the number of potentially visited vertices by the end of the $k$th round as

\begin{equation}\label{def:pot_visitados}
    V'_0:=1; \hspace{5mm} V'_k:=1+\sum_{j=1} ^{k} \bb1_{(X_j=2)}, \hspace{1mm} k\in \mathbb{N} 
\end{equation}

\noindent
and the number of vertices visited at the end of the $k$th round as

\begin{equation} \label{def:visitados}
V_k:=V'_k \bb1_{(k < R)} + V'_R \bb1_{(k \geq R)},\hspace{1mm} k\in \mathbb{N}.
\end{equation}

We define the total number of vertices visited during the whole process by

\begin{equation} \label{def:visitados_total}
V_\infty:=\lim_{k \to \infty} V_k=V_{R}.
\end{equation}

Note that $V_\infty=V_R$ is not affected by the extra particles, as they are triggered only after $R$. The extra particles are useful for computational purposes, as they make it possible to keep track of the number of potentially active particles and potentially visited vertices without having to worry about whether all the original particles have died or not.

It is important to note that $V_\infty$ also describes the total number of vertices visited in the original frog model. Therefore, the study of the frog model will be done from the auxiliary process, using $V_\infty$.

Note that the auxiliary process and its definitions could be done for any conected graph. In some cases one could have $R=\infty$ with positive probability. The auxiliary process fits well the complete graph due to the fact that some computations can be easily done, as

\begin{equation}\label{eq:dist_x}
P(X_k=x|V'_{k-1}=v)=\begin{cases}
    1-p & \text{if $x=0$,}\\
    \frac{p(v-1)}{n} & \text{if $x=1$,}\\
    \frac{p(n-v+1)}{n} & \text{if $x=2$.}
\end{cases}
\end{equation}

\section{Proofs} 
\label{sec:prova_teorema}


We start off with a basic probability result involving Chernoff bounds that will be useful later on.

\begin{lem}[\cite{chernoff}, Theorem 4.5] \label{lema:chernoff}
For $X$ a random variable with binomial distribution, $X \sim B(n,p)$, it holds that
\[P(X \leq E(X) -t) \leq \exp(\frac{-t^2}{2E(X)}), \hspace{3mm} t>0.\]
\end{lem}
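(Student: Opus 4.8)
The plan is to use the standard exponential-moment (Chernoff) method, writing $X=\sum_{i=1}^n Y_i$ as a sum of independent Bernoulli($p$) random variables and controlling the lower tail through the moment generating function of $-X$. Set $\mu=E(X)=np$, and assume $0<t<\mu$ (if $t\geq\mu$ the left-hand probability is either $P(X=0)=(1-p)^n\leq e^{-\mu}\leq e^{-\mu/2}$ or simply $0$, so the claim is immediate). For any $\lambda>0$, Markov's inequality applied to the nonnegative random variable $e^{-\lambda X}$ gives
\[
P(X\leq \mu-t)=P\bigl(e^{-\lambda X}\geq e^{-\lambda(\mu-t)}\bigr)\leq e^{\lambda(\mu-t)}\,E\bigl(e^{-\lambda X}\bigr).
\]
First I would compute the generating function by independence, $E(e^{-\lambda X})=(1-p+pe^{-\lambda})^n=(1+p(e^{-\lambda}-1))^n$, and then bound it with $1+x\leq e^x$ to obtain $E(e^{-\lambda X})\leq \exp(\mu(e^{-\lambda}-1))$. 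This reduces the task to minimizing the exponent $\lambda(\mu-t)+\mu(e^{-\lambda}-1)$ over $\lambda>0$.

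Next I would carry out the optimization. Differentiating the exponent in $\lambda$ and setting the derivative to zero yields the optimal choice $e^{-\lambda}=1-\delta$, where $\delta:=t/\mu\in(0,1)$, i.e. $\lambda=-\ln(1-\delta)>0$. Substituting this value collapses the bound to the sharp form
\[
P(X\leq \mu-t)\leq \exp\bigl(-\mu\,[(1-\delta)\ln(1-\delta)+\delta]\bigr).
\]

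Finally, to recover the clean Gaussian-type bound in the statement, I would establish the elementary inequality $(1-\delta)\ln(1-\delta)+\delta\geq \delta^2/2$ for all $\delta\in[0,1)$. This follows by setting $h(\delta)=(1-\delta)\ln(1-\delta)+\delta-\delta^2/2$, noting $h(0)=0$, and checking $h'(\delta)=-\ln(1-\delta)-\delta\geq 0$ via the series $-\ln(1-\delta)=\sum_{k\geq1}\delta^k/k\geq\delta$. Plugging this into the exponent and using $\delta=t/\mu$ turns $\mu\delta^2/2$ into $t^2/(2\mu)=t^2/(2E(X))$, which is exactly the claimed bound. The only mildly delicate point is this monotonicity argument in the last step; everything else is routine. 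Since the lemma is quoted from \cite{chernoff}, an alternative is simply to cite it, but the argument above is self-contained.
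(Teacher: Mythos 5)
Your proof is correct and is exactly the standard Chernoff-bound argument (exponentiate, apply Markov, optimize $\lambda$, then bound the rate function by $\delta^2/2$); the paper itself gives no proof, simply citing the result as Theorem 4.5 of \cite{chernoff}, where this same derivation appears. Your handling of the edge case $t\geq\mu$ and the monotonicity check for $(1-\delta)\ln(1-\delta)+\delta\geq\delta^2/2$ are both sound, so the argument is complete and self-contained.
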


\subsection{The sub-critical phase: $p\leq 1/2$} \label{sec:subcritica}

\begin{proof}[Proof of Theorem~\ref{teorema:principal} part (i)]

Consider $\{\mathcal{G}_n\}_{n\in \mathbb{N}}$ a sequence of connected graphs. Let us define a process such that every time there is an active particle jumping to a neighboring vertex, it chooses one that has never been visited before. Our aim is to define a process whose number of visited vertices at any given time dominates the same quantity computed for the auxiliary process defined in~Section~\ref{sec:proc_auxiliar}.

Let us define a sequence of random variables $\{X_j^+\}_{j\in \mathbb{N}}$

\begin{equation}\label{eq:compara_y}
X_j^+=\begin{cases}
    0 & \text{if $X_j=0$,}\\
    2 & \text{if $(X_j=1) \cup (X_j=2)$.}
\end{cases}
\end{equation}

Note that $X^+_1,X^+_2,...$ are independent and identically distributed, as they rely only on the survival event of particles. Consider also $X^+$ a random variable with the same distribution as $X^+_1$. It holds that

\begin{equation} \label{eq:esp_y}
    E(X^+)=2P(X^+=2)=2p\leq 1.
\end{equation}

Next, we consider a branching process where the number of children of each individual is distributed as $X^+$. As done for the auxiliary process, we keep track of this branching process by the number of descendants of each individual per turn.

Analogously to definitions (\ref{def:pot_ativas}), (\ref{def:r}), (\ref{def:ativas}), (\ref{def:pot_visitados}), (\ref{def:visitados})  and (\ref{def:visitados_total}), denote the number of potentially alive individuals at the end of the $k$th round of the branching process by

\begin{equation} \label{def:pot_ativas_y}
    A'^{+}_k:=1+\sum_{j=1}^k (X_j^+-1),
\end{equation}

\noindent
and the number of rounds until the extinction of this population by
\begin{equation} \label{def:r_y}
    R^{+}:=\inf\{k \in \mathbb{N}:A'^{+}_k=0\}.
\end{equation}

The total number of individuals in this branching process can be written as

\[V_\infty^{+}:=1+\sum_{j=1}^{R^{+}}\bb1_{(X_j^+=2)}.
\]

Note that (\ref{eq:esp_y}) tells us that the branching process with offspring distribution $X^+$ becomes extinct, and therefore its total number of individuals is finite, with probability 1. The extinction probability does not change if we count the descendants of one individual at a time each round. As, by hypothesis, $\lim_{n \to \infty}f(n)=+\infty $, it follows that:

\[\lim_{n \to \infty}P(V_\infty^+\leq f(n))=1.
\]

From ($\ref{eq:compara_y}$), we have that for every $k\in \mathbb{N}$ it holds that $X^+_k\geq X_k$, which by its turn implies that $A'^+_k\geq A'_k$ and so $R^+\geq R$. It is also true that $V_\infty^+\geq V_\infty$. So, we conclude that

\[\lim_{n \to \infty}P(V_\infty\leq f(n))=1.
\]
\end{proof}

\subsection{The super-critical phase: $p > 1/2$} \label{sec:supercritica}

\begin{proof}[Proof of Theorem~\ref{teorema:principal} part (ii)]
	As the case $p=1$ is trivial, we focus on $p \in (1/2,1)$, splitting the proof in the three following statements

\begin{itemize}
    \item[(a)] For $k_-=2 \frac{4p}{(1+2p)[\frac{2p-1}{8p+4}]^2}\log n$ and $k_+=(1-\frac{2}{1+2p})n$, it holds that $${\lim_{n \to \infty}P(R \in [k_-,k_+])=0}.$$
    \item[(b)] $\lim_{n \to \infty}P(V_\infty\leq c\log n|R < k_-)=1$ and ${\lim_{n \to \infty}P(V_\infty\geq c'n|R > k_+)=1}$.
    \item[(c)] $\lim_{n \to \infty}P(R < k_-)= \frac{1-p}{p}$.
\end{itemize}

Note that with these 3 items, the proof comes to an end. Next, we prove (a), (b) and (c).

\noindent
\textit{Proof of (a).} First we define 

\[k_-=k_-(n):=2 \frac{4p}{(1+2p)[\frac{2p-1}{8p+4}]^2}\log n;\]
\noindent
and
\[k_+=k_+(n):=(1-\frac{2}{1+2p})n.\]

See that $0< 1-\frac{2}{1+2p} < 1$ when $p> 1/2$.
For $k \in \mathbb{N}\cap[k_-,k_+]$, we may consider independent random variables $Y_1,...,Y_k$ such that for $j\in \{1,...,k \}$
\begin{equation}
\label{eq:dist_y}
P(Y_j=x)=
\begin{cases}
    1-p & \text{if $x=0$,}\\
    \frac{pk_+}{n} & \text{if $x=1$,}\\
    \frac{p(n-k_+)}{n} & \text{if $x=2$.}
\end{cases}
\end{equation}

For any $j\in \{1,...,k \}$ it is true that $V'_{j-1}-1\leq j \leq k \leq k_+$. Note that $Y_j$ can be interpreted as the number of descendants of the particle participating in the $k$th round, when we maximize the number of visited vertices (compare with the equations (\ref{eq:dist_y}) and (\ref{eq:dist_x})). So, by coupling, we can assume that $\sum_{j=1}^k X_j$ is always greater than $\sum_{j=1}^k Y_j$. Let us denote this fact by
$\sum_{j=1}^k X_j \succeq  \sum_{j=1}^k Y_j.$

Next, let us define define $a_k:=(\frac{2p}{1+2p}-\frac{1}{2})k$. Note that $0<\frac{2p}{1+2p}-\frac{1}{2}<1$ for $p>1/2$.
By using the Chernoff bounds from Lemma~\ref{lema:chernoff}, together with the fact that $$\sum_{j=1}^k \bb1_{(Y_j=2)} \sim Bin(k,\frac{p(n-k_+)}{n})$$ and (\ref{def:pot_ativas}), we have that

\begin{align} \label{eq:pot_ativas}
P(&A'_k\leq a_k+1)\nonumber\\
&=P(\sum_{j=1}^k X_j \leq k+a_k)\nonumber\\
&\leq P(\sum_{j=1}^k Y_j \leq k+a_k)\nonumber\\
&\leq P(\sum_{j=1}^k \bb1_{(Y_j=2)}\leq \frac{k+a_k}{2})\nonumber\\
&=P(\sum_{j=1}^k \bb1_{(Y_j=2)} \leq kp-\frac{pkk_+}{n}-k(p-\frac{1}{2})+\frac{pkk_+}{n}+\frac{a_k}{2})\\
&=P(\sum_{j=1}^k \bb1_{(Y_j=2)} \leq E(\sum_{j=1}^k \bb1_{(Y_j=2)})-k[\frac{2p-1}{8p+4}])\nonumber\\
&\leq \exp(-\frac{k^2[\frac{2p-1}{8p+4}]^2}{2kp(1-\frac{k_+}{n})})\nonumber\\
& \stackrel{(*)}{\leq}\exp(-\frac{k_-[\frac{2p-1}{8p+4}]^2(1+2p)}{4p})\nonumber\\
&=\exp(-2 \log n)=o(n^{-1}),\nonumber
\end{align}
where for two sequences of functions $f_1,f_2,...$ and $g_1,g_2,...$, we write $f_n=o(g_n)$ if $\lim_{n \to \infty}\frac{f_n}{g_n}=0$.
The Chernoff bounds can be applied as $\frac{2p-1}{8p+4}>0$ for $p>1/2$.
Observe that we can exchange $k$ by $k_-$ in (*) as $k_-\leq k$ and $\exp(-x)$ is decreasing in $x$.

Now, consider the set 
\[A:=\bigcup_{k\in \mathbb{N}\cap[k_-,k_+]} (A'_k\leq a_k+1 )\] 

In words, it means that for some ${k\in \mathbb{N}\cup[k_-,k_+]}$ there is at most $a_k+1$ potentially active particles. By the subaditivity of the probability measure and (\ref{eq:pot_ativas}), we have

\[P(A)\leq \sum_{k\in \mathbb{N}\cap[k_-,k_+]} P(A'_k\leq a_k+1)\leq k_+ o(n^{-1})=o(1).\]
Then 
$$P(A^c)=P(\cap_{k\in \mathbb{N}\cap [k_-,k_+]}(A'_k>a_k+1)) \stackrel{n\to \infty}{\rightarrow} 1.$$


As $R:=\inf\{k:A'_k=0 \},$
$$\lim_{n \to \infty}P(R \in [k_-,k_+])=0.$$

\noindent
\textit{Proof of (b).} From the definition of $R,$ we know that $X_j=0$ for some $j\in \{ 1,2,..,R\}$. Therefore, for $R<k_-$, from (\ref{def:pot_visitados}), (\ref{def:visitados}) and (\ref{def:visitados_total}), we know that 

$$V_\infty=V_R=V'_R=1+\sum_{j=1}^R \bb1_{(X_j=2)}\leq 1+ R -1<k_-. $$

So, we conclude that
\[\lim_{n \to \infty}P(V_\infty\leq k_-|R < k_-)=1.\]

\vspace{3mm}

Furthermore, for $\lfloor k_+ \rfloor=\max\{x\in \mathbb{Z}:x \leq k_+\}$, we have that the probability of having $\frac{k_++a_{k_+}}{2}=n(\frac{12p^2-4p-1}{4(1+2p)^2})$ or fewer potentially visited vertices equals to
\begin{equation}
\begin{aligned} \label{eq:vert_k+}
P(V'_{\lfloor k_+ \rfloor}\leq \frac{k_++a_{k_+}}{2})
&=P(\sum_{k=1}^{\lfloor k_+ \rfloor} \bb1_{(X_k=2)}+1 \leq \frac{k_++a_{k_+}}{2})\\
&=P(\sum_{k=1}^{\lfloor k_+ \rfloor} \bb1_{(X_k=2)} \leq \frac{(k_+-1)+(a_{k_+}-1)}{2})\\
&\stackrel{(*)}{\leq} P(\sum_{k=1}^{\lfloor k_+ \rfloor} \bb1_{(X_k=2)} \leq \frac{\lfloor k_+ \rfloor +a_{\lfloor k_+ \rfloor}}{2} )\\
&\leq P(\sum_{k=1}^{\lfloor k_+ \rfloor} \bb1_{(Y_k=2)} \leq \frac{\lfloor k_+ \rfloor +a_{\lfloor k_+ \rfloor}}{2} ) =o(1).
\end{aligned}
\end{equation}

The inequality  (*) holds as $\lfloor k_+ \rfloor \geq k_+-1$ and that, $a_k=ck$, for some constant $c\in(0,1)$. Therefore $a_{\lfloor k_+ \rfloor}=c \lfloor k_+ \rfloor \geq c(k_+ -1)=a_{k_+}-c \geq a_{k_+}-1$. 

Moreover, see that (\ref{eq:vert_k+}) is $o(1)$ because $\lfloor k_+ \rfloor \in \mathbb{N}\cap[k_-,k_+]$ and that term shows up in (\ref{eq:pot_ativas}).


Therefore
\[P(V'_{\lfloor k_+ \rfloor}\leq \frac{k_+ +a_{k_+}}{2}|R > k_+)P(R > k_+)+P(V'_{\lfloor k_+ \rfloor}\leq \frac{k_+ +a_{k_+}}{2}|R \leq k_+)P(R \leq k_+)\stackrel{n \to \infty}{\rightarrow} 0.\]

So, if $\lim_{n \to \infty} P(R > k_+)>0$ (see part c), it holds that $\lim_{n \to \infty}P(V'_{\lfloor k_+ \rfloor}>\frac{k_+ +a_{k_+}}{2}|R > k_+)=1$. In addition, by (\ref{def:pot_visitados}), (\ref{def:visitados}) and (\ref{def:visitados_total}) when $ R > k_+ \geq \lfloor k_+ \rfloor$, it is true that $V_\infty=V_R\geq V_{\lfloor k_+ \rfloor}=V'_{\lfloor k_+ \rfloor}$.

From this, we conclude that
\[\lim_{n \to \infty}P(V_\infty\geq \frac{k_+ +a_{k_+}}{2}|R > k_+)=1.\]

Remember that $k_+=c_1 n$ for a constant $c_1\in(0,1)$ and that $a_{k_+}=c_2 k_+=c_1 c_2 n$ for a constant $c_2\in(0,1)$, then $\frac{k_+ +a_{k_+}}{2}=c' n$ for some $c'\in (0,1)$, thus concluding the proof of (b).

\vspace{3mm}

\noindent
\textit{Proof of (c).}

Let us return to the random variable $X^+$ presented in sub-section~\ref{sec:subcritica}.

\begin{equation}\label{eq:dist_x+}
P(X^+=x)=
\begin{cases}
    1-p, & \text{if $x=0$,}\\
    p, & \text{if $x=2$.}\\
\end{cases}    
\end{equation}

Again, we consider a branching process with offspring distribution given by $X^+$, where the number of descendants of the individuals is evaluated in turns, and return to definitions (\ref{def:pot_ativas_y}), (\ref{def:r_y}), rewritten below.
\[ 
    A'^{+}_k:=1+\sum_{j=1}^k (X_j^+-1)
\]
representing the number of potentially active particles of this branching process at the end of the $k$th round and
\[
    R^{+}:=\inf\{k \in \mathbb{N}:A'^{+}_k=0\}
\]
representing the instant where the branching process ends. It is now necessary to define $R^+:=+\infty$ if $\{k \in \mathbb{N}:A'^{+}_k=0 \}=\emptyset$, since it can happen, with positive probability in the case $p>1/2$, of having a never ending process. Furthermore, we define
\begin{equation}\label{def:ativas+}
    A^{+}_k:=A'^+_k\mathbb{I}(k<R^+)
\end{equation}
representing the number of effectively active particles of this branching process at the end of the $k$th round.

Let us define the event for which the branching process with offspring distribution $X^+$ extinguishes, but after the round $k_-$ by
\[B^+_{k_-}:=\{k_-<R^+< +\infty\}.\] 

We define one last branching process, with the same rounds scheme, with offspring distribution $X^-$, where

\begin{equation}\label{eq:dist_x-}
P(X^-=x)=
\begin{cases}
    1-p & \text{if $x=0$,}\\
    \frac{pk_-}{n} & \text{if $x=1$,}\\
    \frac{p(n-k_-)}{n} & \text{if $x=2$.}
\end{cases}\end{equation}

Similarly, consider $X^-_1,X^-_2,...$ as identically distributed independent random variables with the same distribution of $X^-$ and define \[A'^{-}_k:=1 +\sum_{j=1}^k (X_j^- -1)\] as the number of potentially alive individuals at the end of the $k$th round. 

Furthermore, define

\[
    R^{-}:=\inf\{k \in \mathbb{N}:A'^{-}_k=0\}
\]
as the moment when the branching process with offspring given by $X^-$ ends. Consider that $R^-:=+\infty$ if $\{k \in \mathbb{N}:A'^{-}_k=0 \}=\emptyset$.


We have that $k \leq k_-\Rightarrow V'_{k-1}=1+\sum_{j=1}^{k-1} \bb1_{(X_j=2)} \leq k_-$. Therefore, comparing (\ref{eq:dist_x+}) and (\ref{eq:dist_x-}) with (\ref{eq:dist_x}), we conclude that $\sum_{j=1}^k X^-_j \preceq \sum_{j=1}^k X_j \preceq \sum_{j=1}^k X^+_j$, and so $A'^{-}_k \preceq A'_k\preceq A'^{+}_k$ for all $k\in \mathbb{N}\cap [1,k_-]$. We then see that it is possible to couple the three process so that $R^{+}\leq k_- \Rightarrow R\leq k_-$ and also $R\leq k_- \Rightarrow R^-\leq k_-$. 

So, it is true that

\begin{equation} \label{eq:confronto}
P(R^+<\infty) -P(B^+_{k_-})=P(R^{+}\leq k_-)\leq P(R \leq k_-) \leq P(R^-\leq k_-) \leq P(R^-<\infty).
\end{equation}

By properties of branching processes, $P(R^+<\infty)$ is the smallest positive solution of

\[P(R^+<\infty)=1-p+P(R^+<\infty)^2p.
\]

Solving the quadratic equation above, we realize that \[P(R^+<\infty)=\frac{1-p}{p}.\]

It also holds that $P(R^-<\infty)$ is the smallest positive solution of
\[P(R^-<\infty)=1-p+P(R^-<\infty)\frac{pk_-}{n}+P(R^-<\infty)^2\frac{p(n-k_-)}{n}.
\]

Solving the quadratic equation above, we see that \[P(R^-<\infty)=\frac{n(1-p)}{pn-pk_-}\stackrel{n \to \infty}{\rightarrow} \frac{1-p}{p}.\]

Denote $\lceil k_- \rceil=\min\{x\in \mathbb{Z}:x\geq k_-\}$. From (\ref{eq:pot_ativas}), it holds that $\lim_{n \to \infty}P(A'_{ \lceil k_- \rceil}>a_{\lceil k_- \rceil}+1)=1$ and therefore $\lim_{n \to \infty}P(A'^{+}_{ \lceil k_- \rceil}>a_{\lceil k_- \rceil}+1)=1$. Then, if $\lim_{n \to \infty} P(R^{+}> k_-)>0$ (which is true because $E(X^+)>1$), we have that $\lim_{n \to \infty}P(A'^{+}_{\lceil k_- \rceil}>a_{\lceil k_- \rceil}+1|R^{+}> k_-)=1$. Besides that, when $R^{+}> k_-$, it holds that $R^{+}\geq {\lceil k_- \rceil}$ and from (\ref{def:ativas+}), we have that
$A^{+}_{\lceil k_- \rceil}=A'^{+}_{\lceil k_- \rceil}$. Then, we conclude

\[\begin{aligned}
P(B_{k_-}^+)&= P(R^+>k_-,R^+<\infty)\\
&=P(R^+>k_-,R^+<\infty,A^{+}_{\lceil k_- \rceil}>a_{\lceil k_- \rceil}+1)+P(R^+>k_-,R^+<\infty,A^{+}_{\lceil k_- \rceil}\leq a_{\lceil k_- \rceil}+1) \\
&\leq \frac{P(R^+>k_-,R^+<\infty,A^{+}_{\lceil k_- \rceil}>a_{\lceil k_- \rceil}+1)}{P(R^+>k_-,A^{+}_{\lceil k_- \rceil}>a_{\lceil k_- \rceil}+1)}+\frac{P(R^+>k_-,A^{+}_{\lceil k_- \rceil}\leq a_{\lceil k_- \rceil}+1)}{P(R^+>k_-)}\\
& = P(R^+<\infty,|R^+>k_-,A^{+}_{\lceil k_- \rceil}>a_{\lceil k_- \rceil}+1)+P(A^{+}_{\lceil k_- \rceil}\leq a_{\lceil k_- \rceil}+1|R^+>k_-) \\
&\leq (\frac{1-p}{p})^{a_{\lceil k_- \rceil}+1}+P(A^{+}_{\lceil k_- \rceil}\leq a_{\lceil k_- \rceil}+1|R^+>k_-) \stackrel{n \to \infty}{\rightarrow}0
\end{aligned}
\]

where the last inequality is given by the extinction probability of at least $a_{\lceil k_- \rceil}+1$ independent branching processes, each with extinction probability $\frac{1-p}{p}$.


Applying to (\ref{eq:confronto}) all limits found, we conclude that

\[\lim_{n \to \infty} P(R<k_-)=\frac{1-p}{p}.\]
\end{proof}

\bibliographystyle{alpha}
\bibliography{sample}

\end{document}